\makeatletter \thm@headfont{\bfseries\scshape} \makeatother
\newtheorem{thm}{Theorem}
\newtheorem{lem}[thm]{Lemma}
\newtheorem{cor}[thm]{Corollary}
\newtheorem{pro}[thm]{Proposition}
\newtheorem{rem}[thm]{Remark}
\def\SU{{\displaystyle \sum_1^\infty}_{\raise 6pt\hbox{$\scriptstyle u$}}}
\def\PIU{{\displaystyle \prod_1^\infty}_{\raise 6pt\hbox{$\scriptstyle u$}}}
\def\SUK{{\displaystyle \sum_1^k}_{\raise 6pt\hbox{$\scriptstyle u$}}}
\def\SUKZERO{{\displaystyle \sum_{k_0}^\infty}_{\raise 6pt\hbox{$\scriptstyle u$}}}
\def\SUKL{{\displaystyle \sum_k^\ell}_{\raise 6pt\hbox{$\scriptstyle u$}}}
\begin{document}\parindent0pt
\title[Almost sure convergence of products of $2\times2$ nonnegative matrices]%insert (if necessary, short) title of paper for page heading
      {Almost sure convergence of products of $2\times2$ nonnegative matrices}%insert title of paper
%\author[]%insert (if necessary short form of) author(s) for page heading
%       {}%insert author(s)

%\address{{\bf{Universit\'e d'Aix-Marseille}}\\
%             }%add new line with \\
%\email{}

%\author[]%insert (if necessary short form of) author(s) for page heading
%       {}%insert author(s)
\author[A. Thomas]{Alain Thomas}
\address[Alain Thomas]{
LATP, 39, rue Joliot-Curie,\hfil\break
13453 Marseille, Cedex 13,
France}
\email{thomas@cmi.univ-mrs.fr}
%\address{{\bf{Universit\'e d'Aix-Marseille}}\\
%             }%add new line with \\
%\email{}

%\author[]%insert (if necessary short form of) author(s) for page heading
%       {}%insert author(s)

%\address{{\bf{}}\\
%             }%add new line with \\
%\email{}

\keywords{random matrices}

\subjclass{15B52}

%\thanks{}

\begin{abstract}
We study the almost sure convergence of the normalized columns in an infinite product of nonnegative matrices, and the almost sure rank one property of its limit points. Given a probability on the set of $2\times2$ nonnegative matrices, with finite support $\mathcal A=\{A(0),\dots,A(s-1)\}$, and assuming that at least one of the $A(k)$ is not diagonal, the normalized columns of the product matrix $P_n=A(\omega_1)\dots A(\omega_n)$ converge almost surely (for the product probability) with an exponential rate of convergence if and only if the Lyapunov exponents are almost surely distinct. If this condition is satisfied, given a nonnegative column vector $V$ the column vector $\frac{P_nV}{\Vert P_nV\Vert}$ also converges almost surely with an exponential rate of convergence. On the other hand if we assume only that at least one of the $A(k)$ do not have the form $\begin{pmatrix}a&0\\0&d\end{pmatrix}$, $ad\ne0$, nor the form $\begin{pmatrix}0&b\\d&0\end{pmatrix}$, $bc\ne0$, the limit-points of the normalized product matrix $\frac{P_n}{\Vert P_n\Vert}$ have almost surely rank~$1$ --~although the limits of the normalized columns can be distinct~-- and $\frac{P_nV}{\Vert P_nV\Vert}$ converges almost surely with a rate of convergence that can be exponential or not exponential.

\end{abstract}
%
%\vskip0.5cm%
\maketitle
%\begin{centerline}%
%{\it Communicated by Oto Strauch}
%\end{centerline}
%\vskip0.5cm%
\begin{centerline}%
{\dedicatory\textsl{}}
\end{centerline}
%%%%%%%%%%%%%%%%%%%%%%%%%%%%%%%%%%%%%%%%%%%%%%%%%%%%%%%%%%%%%%%%%%%%%%%%%%%%%%%
%%%%START YOUR TEXT
%%%%%%%%%%%%%%%%%%%%%%%%%%%%%%%%%%%%%%%%%%%%%%%%%%%%%%%%%%%%%%%%%%%%%%%%%%%%%%%

\centerline{\sc Introduction}

\bigskip

Given a finite set of nonnegative matrices
$$
A(0)=\begin{pmatrix}a(0)&b(0)\\c(0)&d(0)\end{pmatrix},\dots,A(s-1)=\begin{pmatrix}a(s-1)&b(s-1)\\c(s-1)&d(s-1)\end{pmatrix}
$$
we consider the product matrix
$$
P_n(\omega)=A(\omega_1)\dots A(\omega_n)=\begin{pmatrix}\alpha_n(\omega)&\beta_n(\omega)\\\gamma_n(\omega)&\delta_n(\omega)\end{pmatrix},\quad\omega=(\omega_n)\in\{0,1,\dots,s\}^\mathbb N
$$
and we are interested by the almost sure limit points of $\frac{P_n}{\Vert P_n\Vert}$ and the almost sure convergence of the normalized columns of $P_n$. The set $\{0,\dots,s-1\}^{\mathbb N}$ is endowed by the product probability, defined from $p_0,\dots,p_{s-1}>0$ with $\sum_kp_k=1$. This product is easily computable when all the matrices are upper triangular, or when all the matrices are lower triangular, and also when they are stochastic \cite[Proposition 1.2]{sto}.

The book of Bougerol and Lacroix, since its purpose is different, do not give any indication in the particular case that we study in this paper: indeed the hypothesis of contraction they make, for the almost sure convergence of $\frac{P_nV}{\Vert P_nV\Vert}$ (\cite[Part A III Theorem 4.3]{BL}), is the existence -- for any $k\in\mathbb N$ -- of a matrix~$M_k$, product of matrices belonging to the set $\{A(0),\dots,A(s-1)\}$, such that $\frac{M_k}{\Vert M_k\Vert}$ converges to a rank $1$ matrix when $k\to\infty$ (\cite[Part A III Definition 1.3]{BL}).

On the other hand, the weak ergodicity defined in \cite[Definition 3.3]{S} holds almost surely if and only if the product matrix $P_n$ is almost surely positive for $n$ large enough. However the strong ergodicity (\cite[Definition 3.4]{S}) do not necessary hold, even if all the $A(k)$ are positive.

The outset of the present study is the following theorem. The norm we use is the norm-$1$, and we say that the normalized columns of $P_n(\omega)$ converge if each column of $P_n(\omega)$ divided by its norm-$1$ (if nonnull) converges in the usual sense. We say that the rate of convergence is exponential or geometric when the difference -- between the entries of the normalized column and their limits -- is less than $Cr^n$ with $C>0$ and $0<r<1$.

\begin{thm}\label{main}
Let $\mathcal A=\{A(0),\dots,A(s-1)\}$ be a finite set of nonnegative matrices such that at least one of the $A(k)$ is not diagonal.

(i) The normalized columns of $P_n$ converge almost surely with an exponential rate of convergence if and only if  the singular values $\lambda_i(n)$ of $P_n$ satisfy almost surely
\begin{equation}\label{*}
\lim_{n\to\infty}(\lambda_1(n))^{\frac1n}\ne\lim_{n\to\infty}(\lambda_2(n))^{\frac1n}.
\end{equation}

(ii) If (\ref{*}) holds the limit-points of the normalized matrix $\frac{P_n}{\Vert P_n\Vert}$ have almost surely rank~$1$ and, given a nonnegative column vector $V$, the normalized column vector $\frac{P_nV}{\Vert P_nV\Vert}$ converges almost surely with an exponential rate of convergence.

(iii) Nevertheless the normalized matrix $\frac{P_n}{\Vert P_n\Vert}$ diverges almost surely, except in the case where the matrices $A(0),\dots,A(s-1)$ have a common left eigenvector.
\end{thm}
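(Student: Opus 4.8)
The plan is to reduce the convergence of $\frac{P_n}{\Vert P_n\Vert}$ to that of a single scalar sequence $z_n\in[0,+\infty]$ driven by a \emph{forward} composition of independent random M\"obius maps, and then to use a renewal argument to force any limit of such a composition to be a point fixed by every one of those maps --- which is exactly a common left eigenvector. This works regardless of $(\ref{*})$, so (iii) holds whenever the matrices $A(0),\dots,A(s-1)$ have no common left eigenvector.

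First I would reduce to a scalar. Write $x_n=\alpha_n+\gamma_n$ and $y_n=\beta_n+\delta_n$ for the $\ell^1$-norms of the two columns of $P_n$; since $P_0$ is the identity, $(x_n,y_n)=(1,1)P_n$. If $\frac{P_n}{\Vert P_n\Vert}$ converges, then all four of its entries converge, hence so do $\frac{x_n}{\Vert P_n\Vert}$ and $\frac{y_n}{\Vert P_n\Vert}$; since $\Vert P_n\Vert$ is $x_n+y_n$ or $\max(x_n,y_n)$ (according to the convention used for the matrix norm-$1$), a short computation shows in either case that $z_n:=\frac{x_n}{y_n}$ converges in $[0,+\infty]$. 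Moreover, the column $\ell^1$-norms of $P_{n+1}=P_nA(\omega_{n+1})$ are obtained from those of $P_n$ by right multiplication by $A(\omega_{n+1})$ (no cancellation occurs, all entries being $\ge0$), so $(x_{n+1},y_{n+1})=(x_n,y_n)A(\omega_{n+1})$ and therefore $z_{n+1}=f_{\omega_{n+1}}(z_n)$, where $f_k(z):=\frac{a(k)z+c(k)}{b(k)z+d(k)}$. Because $A(k)$ is nonnegative, $f_k$ is a continuous self-map of the compact interval $[0,+\infty]$ (its only possible pole is at $-d(k)/b(k)\le0$, $f_k(+\infty)=a(k)/b(k)$, and when $\det A(k)=0$ or $A(k)$ has a null column $f_k$ is a constant map). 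Thus $(z_n)$ is the forward orbit of $z_0=1$ under the independent random maps $f_{\omega_1},f_{\omega_2},\dots$.

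The core step is the following. Suppose $\frac{P_n}{\Vert P_n\Vert}$ converges on an event $E$ of positive probability, so that on $E$ one has $z_n\to z_\infty\in[0,+\infty]$. Fix $k\in\{0,\dots,s-1\}$. Since $p_k>0$ and the $\omega_i$ are independent, $\{n:\omega_{n+1}=k\}$ is infinite with probability $1$; letting $n\to\infty$ along this set and using the continuity of $f_k$ gives $z_\infty=\lim_n z_{n+1}=\lim_n f_k(z_n)=f_k(z_\infty)$. Intersecting the corresponding probability-$1$ events over the finitely many values of $k$, we find that on $E$ the point $z_\infty$ is fixed by every $f_k$. But $f_k(z^*)=z^*$ for $z^*\in[0,+\infty]$ means precisely that the row vector $(z^*,1)$ --- or $(1,0)$ when $z^*=+\infty$ --- is a left eigenvector of $A(k)$, since $(z^*,1)A(k)=(a(k)z^*+c(k),\,b(k)z^*+d(k))$ is proportional to $(z^*,1)$ if and only if $\frac{a(k)z^*+c(k)}{b(k)z^*+d(k)}=z^*$. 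Hence a common fixed point of all the $f_k$ is the same thing as a common left eigenvector of $A(0),\dots,A(s-1)$; so if there is none, $E$ is disjoint from a probability-$1$ event, i.e.\ $\frac{P_n}{\Vert P_n\Vert}$ diverges almost surely.

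The step I expect to need the most care is the bookkeeping for the degenerate matrices --- those of rank one, or with a null column --- for which $f_k$ is a constant map: one must check that such an $f_k$ is genuinely a continuous self-map of $[0,+\infty]$, that the value at which it is stuck is a left-eigenvector direction of $A(k)$ (the kernel direction of a rank-one matrix being a left eigenvector, for the eigenvalue $0$), and that the presence of even one such $f_k$ forces $z_\infty$ to that value, so these cases are covered by ``common left eigenvector'' as well; one should likewise record that the statement presupposes $P_n\ne0$ for all $n$, which holds under the standing hypotheses. It is worth noting, finally, that a common left eigenvector is necessary but in general not sufficient for convergence --- a family simultaneously conjugate to diagonal matrices does converge under $(\ref{*})$, whereas for instance $A(0)=\begin{pmatrix}1&0\\1&2\end{pmatrix}$ and $A(1)=\begin{pmatrix}1&0\\3&2\end{pmatrix}$ share the left eigenvector $(1,0)$ yet $\frac{P_n}{\Vert P_n\Vert}$ still diverges --- which is why the exceptional case is stated without a converse assertion.
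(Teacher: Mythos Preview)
Your write-up addresses only part (iii); parts (i) and (ii) --- the equivalence between exponential convergence of the normalized columns and the separation of Lyapunov exponents, and the rank-one and convergence consequences of (\ref{*}) --- are not attempted. In the paper these are obtained by a case analysis: Remark~\ref{rem}(i) disposes of rank-one factors, Section~\ref{positive} treats the case where $P_n$ is eventually positive via Birkhoff contraction of the Hilbert metric (Corollary~\ref{exp}), Section~\ref{triangular} handles the triangular case by explicit computation of $s_n(\omega)$, and Section~\ref{diagonal} the remaining two-entry cases. None of this machinery appears in your proposal, so as a proof of the full theorem it is incomplete.

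For (iii) itself your argument is correct and takes a different route from the paper's. The paper proves a more general fact (Proposition~\ref{infinite}, valid for $d\times d$ complex matrices and any norm): if $P=\lim_n P_n/\Vert P_n\Vert$ exists and $A_n=A$ infinitely often, then writing $\lambda_n=\Vert P_n\Vert/\Vert P_{n-1}\Vert$ and passing to the limit in $\frac{P_{n-1}}{\Vert P_{n-1}\Vert}A=\lambda_n\frac{P_n}{\Vert P_n\Vert}$ gives $PA=\Vert PA\Vert\,P$, so every nonzero row of $P$ is a left eigenvector of $A$. Your M\"obius reduction instead tracks the ratio of column sums $z_n=x_n/y_n$, i.e.\ the projective class of the row $(1,1)P_n$, and the common eigenvector it produces is $(1,1)P$ rather than a row of $P$; the two arguments share the same ``each $A(k)$ occurs infinitely often, pass to the limit'' core. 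The paper's version is shorter, dimension-free, needs no nonnegativity, and avoids the degenerate-map bookkeeping you flag; your version makes the underlying one-dimensional random dynamical system explicit, which is an attractive viewpoint in the $2\times2$ nonnegative setting but does not generalize.
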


%Now in the sections 4, 5 and 6 we specify the conditions for the almost sure convergence. In particular the cases of non-exponential convergence are detailed in Theorem \ref{tri} and~\S\ref{diagonal}.

The different cases are detailed below:

$\_$ the case where at least one of the $A(k)$ has rank one, in Remark \ref{rem} (i),

$\_$ the cases where all the $A(k)$ have rank two and at least one of the $A(k)$ has more than two nonnull entries in Sections \ref{positive} and \ref{triangular},

$\_$ the cases where all the $A(k)$ have rank two and two nonull entries in Section \ref{diagonal}.

%The general theory outlined in \cite{BL} do not give any indication in the particular case we consider in this paper, because the theorem given in \cite[Part A III.4.]{BL} presupposes some rank-one condition (see \cite[Part A III.1.]{BL}). In the sections 4, 5 and 6 we specify the conditions for the almost sure convergence. In particular the cases of non-exponential convergence are detailed in Theorem \ref{tri} and~\S\ref{di}. 

\begin{rem}\label{rem}
(i) If one of the matrices $A(k)$ has rank $1$, the normalized columns of $P_n$ are almost surely constant and equal for $n$ large enough, as well as $\frac{P_nV}{\Vert P_nV\Vert}$ for any nonnegative column vector $V$ such that $\forall n,P_nV\ne0$. So we can suppose in the sequel that all the $A(k)$ have rank $2$.

(ii) If both normalized columns of $P_n$ converge (resp. converge exponentially) to the same limit, then for any nonnegative column vector $V$ the normalized column $\frac{P_nV}{\Vert P_nV\Vert}$ is a nonnegative linear combination of them, so it converges (resp. it converges exponentially) to the same limit. The limit points of $\frac{P_n}{\Vert P_n\Vert}$ have rank~$1$.

(iii) If $\frac{P_n}{\Vert P_n\Vert}$ converges (resp. converges exponentially), then for any nonnegative column vector $V$ the normalized column $\frac{P_nV}{\Vert P_nV\Vert}$ converges (resp. converges exponentially) because it is $\frac{\frac{P_n}{\Vert P_n\Vert}V}{\Vert\frac{P_n}{\Vert P_n\Vert}V\Vert}$.

\end{rem}

\section{Some triangular examples}

$\_$ A case where the normalized columns converge almost surely to the same limit with a convergence rate in $\frac1n$. Suppose that $A(k)=\begin{pmatrix}1&b(k)\\0&1\end{pmatrix}$ and that $\exists k_0,\ b(k_0)\ne0$. We have
$$
P_n=\begin{pmatrix}1&\sum_{i=1}^nb(\omega_i)\\0&1\end{pmatrix}.
$$
We obtain the normalized second column (resp. the normalized matrix) by dividing the second column of $P_n$ (resp. $P_n$ itself) by $1+\sum_{i=1}^nb(\omega_i)$. So both normalized columns converge almost surely to $\begin{pmatrix}1\\0\end{pmatrix}$, and the normalized matrix converges almost surely to $\begin{pmatrix}0&1\\0&0\end{pmatrix}$. Since the density of the set $\{i\;;\;\omega_i=k_0\}$ is alomst surely $\frac1s$, $\sum_{i=1}^nb(\omega_i)$ has the order of growth of $n$. So the convergence rate of the second normalized column, and the one of the normalized matrix $\frac{P_n}{\Vert P_n\Vert}$, have the order of~$\frac1n$.

$\_$ A case where the normalized columns converge almost surely to the same limit with an exponential convergence rate. Suppose that $A(k)=\begin{pmatrix}2&b(k)\\0&1\end{pmatrix}$ and that $\exists k_0,\ b(k_0)\ne0$. We have
$$
P_n=\begin{pmatrix}2^n&\sum_{i=1}^n2^{i-1}b(\omega_i)\\0&1\end{pmatrix}
$$
so the normalized columns of $P_n$ converge almost surely to $\begin{pmatrix}1\\0\end{pmatrix}$ and the limit points of the normalized matrix $\frac{P_n}{\Vert P_n\Vert}$ have the form $\begin{pmatrix}\alpha&1\\0&0\end{pmatrix}$ or $\begin{pmatrix}1&\alpha\\0&0\end{pmatrix}$, $\alpha\in[0,1]$.

$\_$ A case where the normalized columns converge almost surely to two different limit with an exponential convergence rate. Suppose that $A(k)=\begin{pmatrix}1&b(k)\\0&2\end{pmatrix}$, we have
$$
P_n=\begin{pmatrix}1&\sum_{i=1}^n2^{n-i}b(\omega_i)\\0&2^n\end{pmatrix}.
$$
so the limit of the second normalized column is $\begin{pmatrix}\frac s{s+1}\\\frac1{s+1}\end{pmatrix}$ with $s=\sum_{i=1}^\infty2^{-i}b(\omega_i)$.

$\_$ Another case where the normalized columns converge almost surely to the same limit, but the convergence rate is non-exponential. Suppose that the alphabet has two elements, $A(0)=\begin{pmatrix}2&2\\0&1\end{pmatrix}$ and $A(1)=\begin{pmatrix}1&1\\0&2\end{pmatrix}$, and that $p_0=p_1=\frac12$. We have
$$
P_n=\begin{pmatrix}2^{k_0(n)}&2^{k_1(n)}\sum_{i=1}^n2^{k_0(i)-k_1(i)}\\0&2^{k_1(n)}\end{pmatrix}\hbox{ where }k_j(i)=\#\{i'\le i\;;\;\omega_{i'}=j\}.
$$
By the well known recurrence property one has almost surely $k_0(i)=k_1(i)$ for infinitely many~$i$, so both normalized columns converge to $\begin{pmatrix}1\\0\end{pmatrix}$. The difference between this vector and the second column of $P_n$ has entries $\pm\frac1{1+\sum_{i=1}^n2^{k_0(i)-k_1(i)}}$. With probability $1$, this difference do not converge exponentially to $0$ because \hbox{$\lim_{i\to\infty}\frac{k_0(i)-k_1(i)}i=0$}.

\section{The singular values of $P_n$}

The singular values of $P_n=\begin{pmatrix}\alpha_n&\beta_n\\\gamma_n&\delta_n\end{pmatrix}$, let $\lambda_1(n)$ and $\lambda_2(n)$, are by definition the positive roots of the eigenvalues of $^tP_nP_n$:
$$
\hskip-23pt\lambda_i(n):=\sqrt{\frac{{\alpha_n}^2+{\beta_n}^2+{\gamma_n}^2+{\delta_n}^2\pm{\sqrt{({\alpha_n}^2+{\beta_n}^2+{\gamma_n}^2+{\delta_n}^2)^2-4(\alpha_n\delta_n-\beta_n\gamma_n)^2}}}2}.
$$
Now the Lyapunov exponents $\lambda_i:=\lim_{n\to\infty}\frac1n\log\lambda_i(n)$ exist almost surely by the subadditive ergodic theorem \cite{R}, and one has
$$
\lambda_1=\lim_{n\to\infty}\frac1{2n}\log({\alpha_n}^2+{\beta_n}^2+{\gamma_n}^2+{\delta_n}^2)$$\begin{equation}\label{**}\lambda_2=\lambda_1+\lim_{n\to\infty}\frac1n\log\frac{\vert\alpha_n\delta_n-\beta_n\gamma_n\vert}{{\alpha_n}^2+{\beta_n}^2+{\gamma_n}^2+{\delta_n}^2}.
\end{equation}
Notice that $\lambda_1$ is finite if $P_n$ is not eventually the null matrix: denoting by $\alpha$ and $\beta$ the smaller nonnull value and the greater value of the entries of the matrices $A(k)$ one has
$$
\log\alpha\le\lambda_1\le\log(2\beta).
$$
As for $\lambda_2$, it belongs to $[-\infty,\lambda_1]$. For instance if $P_{n_0}$ has rank $1$, $\lambda_2=\lambda_2(n)=-\infty$ for $n\ge n_0$.

For any nonnegative matrix $M=\begin{pmatrix}a&b\\c&d\end{pmatrix}$ we denote by $d_{\mathcal H}(M)$ the Hilbert distance between the columns of $M$ and by $d_\infty(M)$ the norm-infinite distance between the normalized columns of $M$:
$$
d_{\mathcal H}(M):=\left\vert\log\frac{ad}{bc}\right\vert\quad\hbox{and}\quad d_\infty(M)=\left\vert\frac a{a+c}-\frac b{b+d}\right\vert.
$$

\begin{pro}\label{lower}
$d_{\mathcal H}(M)$ and $d_\infty(M)$ are at least equal to $\frac{\vert ad-bc\vert}{a^2+b^2+c^2+d^2}$.
\end{pro}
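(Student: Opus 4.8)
The plan is to handle the two quantities $d_\infty(M)$ and $d_{\mathcal H}(M)$ separately, in each case reducing the asserted bound to an elementary inequality in one real variable. (The quantity $\frac{|ad-bc|}{a^2+b^2+c^2+d^2}$ is the per-step analogue of the ratio appearing inside the limit defining $\lambda_2-\lambda_1$ in (\ref{**}), so the proposition says that this ratio controls the distance between the two columns from below.)

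For $d_\infty(M)$ I would first put the two fractions over a common denominator,
\[
\frac a{a+c}-\frac b{b+d}=\frac{a(b+d)-b(a+c)}{(a+c)(b+d)}=\frac{ad-bc}{(a+c)(b+d)},
\]
so that $d_\infty(M)=\frac{|ad-bc|}{(a+c)(b+d)}$. It then suffices to prove $(a+c)(b+d)\le a^2+b^2+c^2+d^2$, which follows by expanding the left-hand side as $ab+ad+bc+cd$ and invoking the identity
\[
2\bigl(a^2+b^2+c^2+d^2\bigr)-2\bigl(ab+ad+bc+cd\bigr)=(a-b)^2+(a-d)^2+(b-c)^2+(c-d)^2\ge0 .
\]
(If $a+c=0$ or $b+d=0$ then $M$ has a zero column and $d_\infty(M)$ is undefined; otherwise the computation is valid.)

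For $d_{\mathcal H}(M)$ I would set $x=ad$ and $y=bc$, bound the denominator below by $a^2+b^2+c^2+d^2\ge 2ad+2bc=2(x+y)$ (using $a^2+d^2\ge2ad$ and $b^2+c^2\ge2bc$), and so reduce the claim to $\bigl|\log\tfrac xy\bigr|\ge\frac{|x-y|}{2(x+y)}$ for all $x,y>0$. Writing $t=x/y$, this is $|\log t|\ge\frac{|t-1|}{2(t+1)}$, which I would prove by examining $f(t):=\log t-\frac{t-1}{2(t+1)}$ on $[1,\infty)$: one has $f(1)=0$ and $f'(t)=\frac1t-\frac1{(t+1)^2}\ge0$ because $(t+1)^2\ge t$ for $t>0$, hence $f\ge0$ on $[1,\infty)$; the range $0<t\le1$ then follows by applying this to $1/t$, since both sides of the inequality are invariant under $t\mapsto 1/t$. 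The degenerate cases cause no trouble: if exactly one of $ad$, $bc$ vanishes then $d_{\mathcal H}(M)=+\infty$ and there is nothing to prove, while if $ad=bc=0$ then $ad-bc=0$ and the asserted lower bound is $0$.

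The whole argument is a string of direct computations; the only step that takes a moment is the one-variable inequality $|\log t|\ge\frac{|t-1|}{2(t+1)}$ used for $d_{\mathcal H}$, and even there the monotonicity computation for $f$ is immediate. I do not anticipate a genuine obstacle, beyond being careful to record which non-degeneracy assumptions on $M$ are needed for $d_\infty(M)$ and $d_{\mathcal H}(M)$ to be defined.
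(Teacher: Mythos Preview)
Your proof is correct and follows essentially the same route as the paper's: for $d_\infty(M)$ the two arguments are identical (compute $d_\infty(M)=\frac{|ad-bc|}{ab+ad+bc+cd}$ and bound the denominator by $a^2+b^2+c^2+d^2$ via sums of squares), and for $d_{\mathcal H}(M)$ both combine a one-variable inequality for $|\log t|$ with an AM--GM bound on the denominator. The only cosmetic difference is the distribution of constants: the paper cites the sharper ``classical'' inequality $|\log t|\ge\frac{2|t-1|}{t+1}$ and pairs it with the looser bound $ad+bc\le a^2+b^2+c^2+d^2$, whereas you prove from scratch the weaker $|\log t|\ge\frac{|t-1|}{2(t+1)}$ and compensate with the tighter $2(ad+bc)\le a^2+b^2+c^2+d^2$.
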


\begin{proof}
By the classical inequality $\vert\log t\vert\ge\frac{2\vert t-1\vert}{t+1}$ one has $d_{\mathcal H}(M)\ge\frac{2\vert ad-bc\vert}{ad+bc}$. Now $ad\le a^2+d^2$ and $bc\le b^2+c^2$, so $d_{\mathcal H}(M)\ge\frac{\vert ad-bc\vert}{a^2+b^2+c^2+d^2}$.

On the other side $d_\infty(M)=\frac{\vert ad-bc\vert}{ab+ad+bc+cd}\ge\frac{\vert ad-bc\vert}{a^2+b^2+c^2+d^2}$ because
$$
2(a^2+b^2+c^2+d^2)=(a^2+b^2)+(a^2+d^2)+(b^2+c^2)+(c^2+d^2)\ge2ab+2ad+2bc+2cd.
$$\end{proof}

\begin{cor}\label{exp}
If $d_{\mathcal H}(P_n)$ or $d_\infty(P_n)$ converges exponentially to $0$, the normalized columns of $P_n$ converge exponentially to the same limit and $\lambda_2<\lambda_1$.
\end{cor}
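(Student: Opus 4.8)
The plan is to reduce both hypotheses to the one on $d_\infty$, then run a nested‑interval argument on the one–dimensional simplex of normalized columns, and finally read $\lambda_2<\lambda_1$ off Proposition \ref{lower} together with $(\ref{**})$. First I would record that for every nonnegative $M=\begin{pmatrix}a&b\\c&d\end{pmatrix}$ with nonnull columns one has $d_\infty(M)\le d_{\mathcal H}(M)$: assuming for instance $bc\le ad$,
$$
d_\infty(M)=\frac{ad-bc}{ab+ad+bc+cd}\le\frac{ad-bc}{ad}=1-\frac{bc}{ad}\le\log\frac{ad}{bc}=d_{\mathcal H}(M),
$$
using $1-\frac1u\le\log u$ with $u=\frac{ad}{bc}\ge1$ (the case $ad\le bc$ being symmetric). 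Hence if $d_{\mathcal H}(P_n)\le Cr^n$ then also $d_\infty(P_n)\le Cr^n$, so it suffices to argue under the hypothesis that $d_\infty(P_n)\to0$ exponentially. This hypothesis forces both columns of $P_n$ to be nonnull for $n$ large, say $n\ge n_0$; I then let $u_n,v_n$ be the norm‑$1$ normalizations of the two columns of $P_n$ and work with $n\ge n_0$ throughout, so that every normalization below makes sense.

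The geometric core is that right multiplication by a nonnegative matrix cannot enlarge the segment spanned by the normalized columns. Indeed, from $P_{n+1}=P_nA(\omega_{n+1})$ each column of $P_{n+1}$ is a nonnegative linear combination of the two columns of $P_n$; after normalization, and since for nonnegative vectors the norm‑$1$ of a nonnegative combination of unit vectors is the sum of the coefficients, each of $u_{n+1},v_{n+1}$ is a \emph{convex} combination of $u_n$ and $v_n$. As the nonnegative norm‑$1$ vectors of $\mathbb R^2$ form a segment, this says that the closed sub‑segment $I_n:=[u_n,v_n]$ satisfies $I_{n+1}\subseteq I_n$, its length being the distance $d_\infty(P_n)$ between the two normalized columns.

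Then the $I_n$ ($n\ge n_0$) form nested closed intervals of length $d_\infty(P_n)\le Cr^n\to0$, so $\bigcap_{n\ge n_0}I_n=\{\ell\}$ for a single vector $\ell$; for $m\ge n\ge n_0$ both $u_m$ and $\ell$ lie in $I_n$, hence $u_m$ is within $d_\infty(P_n)\le Cr^n$ of $\ell$, and likewise $v_m$. Thus the two normalized columns converge to the common limit $\ell$ with exponential rate. For the last assertion, Proposition \ref{lower} gives $\dfrac{\vert\alpha_n\delta_n-\beta_n\gamma_n\vert}{\alpha_n^2+\beta_n^2+\gamma_n^2+\delta_n^2}\le Cr^n$ under either hypothesis, whence, the limit in the second line of $(\ref{**})$ existing,
$$
\lambda_2-\lambda_1=\lim_{n\to\infty}\frac1n\log\frac{\vert\alpha_n\delta_n-\beta_n\gamma_n\vert}{\alpha_n^2+\beta_n^2+\gamma_n^2+\delta_n^2}\le\lim_{n\to\infty}\frac1n\log(Cr^n)=\log r<0;
$$
since $\lambda_1$ is finite ($P_n$ not being eventually null, its columns being nonnull), this yields $\lambda_2<\lambda_1$.

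I do not expect a serious obstacle here: the one point needing a moment's care is the monotonicity $I_{n+1}\subseteq I_n$ of the segment of normalized columns, and the bookkeeping that restricts the argument to $n\ge n_0$ so that all the relevant quantities are defined; everything else is routine.
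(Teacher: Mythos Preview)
Your proof is correct and takes a genuinely different route from the paper's. The paper works entirely through the Hilbert metric: for $p,q\ge n$ it bounds the Hilbert distance between any column of $P_p$ and any column of $P_q$ by $d_{\mathcal H}(P_n)\,\tau_{\mathcal B}(P_{n,p,q})\le d_{\mathcal H}(P_n)$ via the Birkhoff contraction coefficient, then passes to $d_\infty$ through the same inequality $d_\infty\le d_{\mathcal H}$ you record, concluding that the normalized columns form Cauchy sequences with a common limit. Your nested-interval argument is more elementary: it dispenses with the Birkhoff coefficient altogether, using only that right multiplication by a nonnegative matrix makes the normalized columns of $P_{n+1}$ convex combinations of those of $P_n$, whence $I_{n+1}\subseteq I_n$. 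This exploits the one-dimensionality of the nonnegative unit simplex in the $2\times2$ case and would not carry over verbatim to $d\times d$; the paper's Birkhoff-metric approach, though slightly heavier, is the one that generalises. A further point in your favour is that your argument treats the $d_\infty$ hypothesis directly, whereas the paper's proof as written argues only from exponential decay of $d_{\mathcal H}(P_n)$. The final step, reading $\lambda_2<\lambda_1$ off Proposition~\ref{lower} and~(\ref{**}), is the same in both proofs.
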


\begin{proof}Let $p,q\ge n$. The Hilbert distance between the $i^{\rm th}$ column of $P_p$ and the $j^{\rm th}$ column of $P_q$ is $d_{\mathcal H}(P_nP_{n,p,q})$, where $P_{n,p,q}$ is the matrix whose columns are the $i^{\rm th}$ column of $A(\omega_{n+1})\dots A(\omega_p)$ and  the $j^{\rm th}$ column of $A(\omega_{n+1})\dots A(\omega_q)$. By the well known property of the Birkhoff coefficient $\tau_{\mathcal B}$ (\cite[Section 3]{S}) this distance is at most $d_{\mathcal H}(P_n)\tau_{\mathcal B}(P_{n,p,q})\le d_{\mathcal H}(P_n)$, so it converge exponentially to $0$.

By the obvious inequality $d_\infty(M)\le d_{\mathcal H}(M)$ the norm-infinite distance between the $i^{\rm th}$ normalized column of $P_p$ and the $j^{\rm th}$ normalized column of $P_q$ ($i=j$ or $i\ne j$) converges exponentially to $0$, so both normalized columns are Cauchy and converge exponentially to the same limit when $n$ tends to infinity, and $\lambda_2<\lambda_1$ by (\ref{**}) and Proposition \ref{lower}.\end{proof}

\section{The case where $P_n$ is almost surely positive for $n$ large enough}\label{positive}

\begin{thm}
Suppose that $A^*=\sum_kA(k)$ is not triangular and that at least one of the $A(k)$ has more than two nonnull entries. Then, with probability $1$, the normalized columns of $P_n$ converge exponentially to the same limit, as well as $\frac{P_nV}{\Vert P_nV\Vert}$ for any nonnegative column vector $V$, and $\lambda_2<\lambda_1$; the limit points of $\frac{P_n}{\Vert P_n\Vert}$ have rank $1$; the weak ergogdicity, in the sense of \cite[Definition 3.3]{S}, holds.
\end{thm}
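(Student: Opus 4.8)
The plan is to reduce the whole statement to the existence of a single positive product of matrices from $\mathcal A$ and then to run a Birkhoff contraction argument. First I would prove the combinatorial core: under the two hypotheses there is a finite word $u=(u_1,\dots,u_L)$ with $A(u):=A(u_1)\cdots A(u_L)$ a positive matrix. Choose $A(i_0)$ with more than two nonnull entries; since all $A(k)$ have rank $2$ by Remark \ref{rem}(i), $A(i_0)$ has at most one zero entry. If it has none, take $u=(i_0)$. If its unique zero lies on the diagonal, a direct $2\times2$ computation shows $A(i_0)^2$ is positive, so $u=(i_0,i_0)$ works. If the zero lies off the diagonal, $A(i_0)$ is triangular; since $A^*=\sum_k A(k)$ is not triangular there is an index $j$ such that $A(j)$ has a strictly positive entry in the off-diagonal slot complementary to the zero of $A(i_0)$, and one checks that $A(i_0)A(j)A(i_0)$ is positive, so $u=(i_0,j,i_0)$ works. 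Along the way I would record the auxiliary fact, proved by the same entrywise bookkeeping, that a product of rank-$2$ nonnegative matrices never has a zero row or a zero column.

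Next I would make the block $u$ occur with positive frequency. Set $p:=p_{u_1}\cdots p_{u_L}>0$. Partitioning $\{1,\dots,n\}$ into consecutive blocks of length $L$, these blocks are independent and each equals $u$ with probability $p$, so by the strong law of large numbers the number $m=m(n)$ of them that equal $u$ satisfies $m(n)\ge c\,n$ for all $n$ large enough, almost surely, with $c>0$ depending only on $p$ and $L$. Fixing such an $\omega$ and such an $n$, write
$$
P_n=B_0\,A(u)\,B_1\,A(u)\,B_2\cdots A(u)\,B_m,
$$
each $B_i$ being a (possibly empty) product of matrices of $\mathcal A$, hence nonnegative with neither a zero row nor a zero column. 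Then every $C_i:=A(u)B_i$ is a \emph{positive} matrix with $\tau_{\mathcal B}(C_i)\le\tau_{\mathcal B}(A(u))=:\tau<1$ and with image cone of Hilbert diameter at most $d_{\mathcal H}(A(u))=:D<\infty$. Using submultiplicativity of the Birkhoff coefficient and the fact that left multiplication by a nonnegative matrix with no zero row does not increase the Hilbert distance between strictly positive columns, one gets
$$
d_{\mathcal H}(P_n)\le d_{\mathcal H}\bigl(C_1C_2\cdots C_m\bigr)\le \tau^{\,m-1}\,D\le \frac{D}{\tau}\,\bigl(\tau^{\,c}\bigr)^{n},
$$
so $d_{\mathcal H}(P_n)\to0$ exponentially fast, almost surely.

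Corollary \ref{exp} now applies and gives immediately that the normalized columns of $P_n$ converge exponentially fast to a common limit and that $\lambda_2<\lambda_1$. Remark \ref{rem}(ii) then yields that for every nonzero nonnegative $V$ the vector $\frac{P_nV}{\Vert P_nV\Vert}$ converges exponentially to the same limit, and that every limit point of $\frac{P_n}{\Vert P_n\Vert}$ has rank $1$. For weak ergodicity, observe that almost surely the block $u$ occurs at some finite position (as already used above), and for every $n$ beyond it one has $P_n=B_0\,A(u)\,C_n$ with $B_0$ having no zero row and $C_n$ having no zero column; hence $A(u)C_n$ is positive and so is $P_n$. Thus $P_n$ is almost surely positive for $n$ large, which by the equivalence recalled in the Introduction is exactly weak ergodicity in the sense of \cite[Definition 3.3]{S}.

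The main obstacle is the first step: organizing the case analysis so that the advertised positive word $u$ genuinely exists under every configuration allowed by the hypotheses — in particular making sure the two "triangular" subcases really use, and are settled by, the non-triangularity of $A^*$. Everything after that is routine: the contraction estimate is the standard Birkhoff argument (with the convenient feature that the exponential rate $\tau^{c}$ is deterministic, only the multiplicative constant depending on $\omega$), and the probabilistic ingredient is an elementary law-of-large-numbers statement about disjoint blocks.
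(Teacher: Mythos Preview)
Your proof is correct and follows essentially the same route as the paper: first produce a finite word $u$ with $A(u)$ positive via the same three-case analysis (positive matrix, zero on the diagonal so the square is positive, triangular case handled via non-triangularity of $A^*$ with the product $A(i_0)A(j)A(i_0)$), then use the almost sure positive frequency of occurrences of $u$ to run the Birkhoff contraction estimate on $d_{\mathcal H}(P_n)$ and conclude through Corollary~\ref{exp} and Remark~\ref{rem}(ii). The only cosmetic differences are that you count disjoint length-$L$ blocks and invoke the SLLN, whereas the paper counts occurrences of the word $kk'k$ directly and uses its density $s^{-3}$; and you spell out the no-zero-row/column and eventual positivity of $P_n$ for weak ergodicity, which the paper handles by a citation.
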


\begin{proof}
By the hypotheses either one of the $A(k)$ is positive, or one of the $A(k)$ is $\begin{pmatrix}a(k)&b(k)\\c(k)&0\end{pmatrix}$ (its square is positive), or  one of the $A(k)$ is triangular with three nonnull entries and another $A(k')$ is such that $A(k)+A(k')>0$. In this last case $A(k)A(k')A(k)$ is positive. So in all cases there exist $k,k'$ such that $A(k)A(k')A(k)$ is positive and, denoting by $k(n)$ the number of occurences of the word $kk'k$ in $\omega_1\dots\omega_n$, the limit of $\frac{k(n)}n$ is almost surely $s^{-3}$.

Clearly the Hilbert distance $d_{\mathcal H}$ and the Birkhoff coefficient $\tau_{\mathcal B}$ (\cite[Section 3]{S}) have the following property:
$$
d_{\mathcal H}(M_1\dots M_i)\le d_{\mathcal H}(M_1)\tau_{\mathcal B}(M_2)\dots\tau_{\mathcal B}(M_i)
$$
where $d_{\mathcal H}$ means the distance between the rows (or the columns) of $M$.

Now we split the product matrix $P_n$ in the following way:
$$
P_n=M_1\dots M_k\hbox{ where }M_i=A(\omega_{n_{i-1}+1})\dots A(\omega_{n_i}),\ n_0=0<n_1<\dots<n_k,
$$
the indexes $n_1,n_3,\dots$ corresponding to the disjoint occurrences of $A(k)A(k')A(k)$:
$$
M_1=A(\omega_1)\dots A(\omega_{n_1-3})A(k)A(k')A(k)\hbox{ and }M_3=M_5=\dots=A(k)A(k')A(k).
$$
Let $C=d_{\mathcal H}(M_1)<\infty$, $r=\tau_{\mathcal B}(A(k)A(k')A(k))<1$ and $r'\in]r^{s^{-3}/3},1[$. We have for $n$ large enough
$$
d_{\mathcal H}(P_n)\le C\tau(A(\omega_{n_1+1})\dots A(\omega_n))\le Cr^{k(n)/3-1}\le Cr'^n
$$
and we conclude with Corollary \ref{exp} and \cite[Lemma 3.3]{S}.\end{proof}

\section{The case where $A^*=\sum_kA(k)$ is triangular not diagonal}\label{triangular}

Assuming for instance that $A^*$ is upper triangular not diagonal, we have
$$
P_n(\omega)=\begin{pmatrix}\alpha_n(\omega)&\delta_n(\omega)s_n(\omega)\\0&\delta_n(\omega)\end{pmatrix}\quad\hbox{with}\quad s_n(\omega)=\sum_{i=1}^n\frac{\alpha_{i-1}(\omega)}{\delta_{i-1}(\omega)}\frac{b(\omega_i)}{d(\omega_i)}.
$$
To know if $\lim_{n\to\infty}s_n(\omega)$ is finite or infinite, and to know the rate of convergence, we use the exponentials of the expected values of $\log a(\cdot)$ and $\log d(\cdot)$:
\begin{equation}\label{***}
p:=a(0)^{p_0}\dots a(s-1)^{p_{s-1}}\hbox{ and }q:=d(0)^{p_0}\dots d(s-1)^{p_{s-1}}.
\end{equation}
By the law of large numbers, for any $\varepsilon>0$ we have almost surely for any integer $n\ge0$
\begin{equation}\label{****}
\kappa\ \frac{p^n}{q^n}\ (1-\varepsilon)^n\le\frac{\alpha_n(\omega)}{\delta_n(\omega)}\le K\ \frac{p^n}{q^n}\ (1+\varepsilon)^n\qquad(\kappa,K\hbox{ constants}).
\end{equation}
On the other side, since $A^*$ is not diagonal, the set of the integers $n$ such that $b(\omega_n)\ne0$ has almost surely the positive density $\frac1s\#\{k\;;\;b(k)\ne0\}$. Given $\varepsilon>0$, for $n$ large enough this set has a nonempty intersection with $[n(1-\varepsilon),n]$. So we deduce from (\ref{****}) that -- replacing eventually $\kappa$ by a smallest constant and $K$ by a greater constant
\begin{equation}\label{*****}
\kappa\ \frac{p^n}{q^n}\ (1-\varepsilon)^n\le s_n(\omega)\quad\hbox{and, if }p\ge q,\quad s_n(\omega)\le K\ \frac{p^n}{q^n}\ (1+\varepsilon)^n.
\end{equation}

\begin{thm}\label{tri}
Suppose that $A^*$ is upper triangular not diagonal, and that all the $A(k)$ have rank $2$. Then $P_n$ has almost surely the following properties:

(i) If $p\ge q$, $\frac{P_nV}{\Vert P_nV\Vert}$ converges to $\begin{pmatrix}1\\0\end{pmatrix}$ for any nonnegative column vector $V$, and the limit points of $\frac{P_n}{\Vert P_n\Vert}$ have rank $1$. The rate of convergence of $\frac{P_nV}{\Vert P_nV\Vert}$ is exponential if and only if $p>q$, and one has $\lambda_2<\lambda_1$ if and only if $p>q$.

(ii) If $p<q$, setting $s(\omega)=\lim_{n\to\infty}s_n(\omega)$, $\frac{P_nV}{\Vert P_nV\Vert}$ converges exponentially to $\begin{pmatrix}\frac{s(\omega)}{s(\omega)+1}\\\frac1{s(\omega)+1}\end{pmatrix}$ if and only if the second entry of $V$ is nonnull. The normalized product matrix $\frac{P_n}{\Vert P_n\Vert}$ converges exponentially to $\begin{pmatrix}0&\frac{s(\omega)}{s(\omega)+1}\\0&\frac1{s(\omega)+1}\end{pmatrix}$ and $\lambda_2<\lambda_1$.
\end{thm}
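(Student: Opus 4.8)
The plan is to read off every assertion from the scalar sequence $s_n(\omega)=\sum_{i=1}^n\frac{\alpha_{i-1}(\omega)}{\delta_{i-1}(\omega)}\frac{b(\omega_i)}{d(\omega_i)}$, distinguishing the three regimes $p>q$, $p=q$, $p<q$ (the first two giving part (i), the third part (ii)). First I would record the reductions: since $A^*$ is upper triangular with nonnegative entries every $A(k)$ is upper triangular, i.e. $c(k)=0$, and since each has rank $2$, $a(k)d(k)\ne0$; hence $\alpha_n=\prod_{i\le n}a(\omega_i)>0$, $\delta_n=\prod_{i\le n}d(\omega_i)>0$, $P_n$ has rank $2$, and $P_nV\ne0$ for every $V\ne0$. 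The normalized columns of $P_n$ are then the constant $\binom10$ and $\widehat C_2:=\frac1{s_n+1}\binom{s_n}{1}$, and $d_\infty(P_n)=\frac1{s_n+1}$. Besides elementary algebra the only inputs are the law of large numbers (giving $\frac1n\log\alpha_n\to\log p$ and $\frac1n\log\delta_n\to\log q$ a.s., hence (\ref{****}), and, when $p=q$, $S_m:=\log\frac{\alpha_m}{\delta_m}=o(m)$), the positive density of $\{i\;;\;b(\omega_i)\ne0\}$ used in (\ref{*****}), and --- only in the borderline regime --- the Hewitt--Savage $0$--$1$ law.

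\emph{Regime $p>q$.} Choosing $\varepsilon$ with $\frac pq(1-\varepsilon)>1$, the lower bound in (\ref{*****}) gives $s_n\ge\kappa(1+\eta)^n$ for some $\eta>0$, so $s_n\to\infty$ and $d_\infty(P_n)=\frac1{s_n+1}\to0$ exponentially. Corollary \ref{exp} then yields the exponential convergence of the normalized columns to the common limit $\binom10$ together with $\lambda_2<\lambda_1$, and Remark \ref{rem}(ii) transfers this to $\frac{P_nV}{\Vert P_nV\Vert}$ for every nonnegative $V\ne0$ and gives the rank-one property of the limit points of $\frac{P_n}{\Vert P_n\Vert}$.

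\emph{Regime $p=q$.} This is the delicate regime, since (\ref{*****}) is now vacuous; one must argue as in the last example of Section~1, through a $0$--$1$ law. As $s_n$ increases to some $s_\infty\in[0,\infty]$ and $S_{i-1}$ is independent of $\omega_i$, one has $\mathbb E[s_n]=\big(\sum_kp_k\frac{b(k)}{d(k)}\big)\sum_{i=1}^n\big(\sum_kp_k\frac{a(k)}{d(k)}\big)^{i-1}\ge cn\to\infty$ by the weighted arithmetic--geometric inequality $\sum_kp_k\frac{a(k)}{d(k)}\ge\prod_k\big(\frac{a(k)}{d(k)}\big)^{p_k}=\frac pq=1$; hence $\mathbb E[s_\infty]=\infty$ by monotone convergence, so $\mathbb P(s_\infty=\infty)>0$, and since $\{s_\infty=\infty\}$ is invariant under finite permutations of the $\omega_i$'s, Hewitt--Savage forces $\mathbb P(s_\infty=\infty)=1$. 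Thus $s_n\to\infty$ a.s., and Remark \ref{rem}(ii) gives, exactly as in the previous regime, the convergence of $\frac{P_nV}{\Vert P_nV\Vert}$ to $\binom10$ and the rank-one property. That the rate is \emph{not} exponential and that $\lambda_2=\lambda_1$ I would deduce from $\frac1n\log s_n\to0$ --- which follows from $s_n\le n\,(\max_k\frac{b(k)}{d(k)})\,e^{\max_{i\le n}S_{i-1}}$ and $S_m=o(m)$: then (\ref{**}) gives $\lambda_1=\log p=\lambda_2$, and for $V$ with nonzero second entry the second coordinate of $\frac{P_nV}{\Vert P_nV\Vert}$, namely $\frac{v_2}{e^{S_n}v_1+(s_n+1)v_2}$, is $\ge c_\varepsilon e^{-\varepsilon n}$ for every $\varepsilon>0$ and all large $n$ (since $e^{S_n}=e^{o(n)}$ and $s_n=e^{o(n)}$), so it is dominated by no $Cr^n$ with $r<1$. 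The crux is thus obtaining $s_n\to\infty$ when the clean estimate (\ref{*****}) carries no information, and then using the sublinear growth $S_m=o(m)$, not merely $s_n\to\infty$, to pin the rate down.

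\emph{Regime $p<q$.} Now (\ref{****}) gives $\frac{\alpha_n}{\delta_n}\le K\big(\frac pq(1+\varepsilon)\big)^n$ with $\frac pq(1+\varepsilon)<1$, so $\frac{\alpha_n}{\delta_n}\to0$ exponentially and the series for $s_n$ converges exponentially to $s(\omega)\in(0,\infty)$. Writing $\frac{P_nV}{\Vert P_nV\Vert}$ as the convex combination of $\binom10$ and $\widehat C_2$ with weights $\frac{v_1\alpha_n}{v_1\alpha_n+v_2\delta_n(s_n+1)}$ and $\frac{v_2\delta_n(s_n+1)}{v_1\alpha_n+v_2\delta_n(s_n+1)}$, and dividing through by $\delta_n$, the weight of $\binom10$ goes to $0$ exponentially when $v_2\ne0$, so $\frac{P_nV}{\Vert P_nV\Vert}\to\begin{pmatrix}\frac{s(\omega)}{s(\omega)+1}\\\frac1{s(\omega)+1}\end{pmatrix}$ exponentially, whereas for $v_2=0$ it is constantly $\binom10\ne\begin{pmatrix}\frac{s(\omega)}{s(\omega)+1}\\\frac1{s(\omega)+1}\end{pmatrix}$; this is the asserted equivalence. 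Since $\frac{\alpha_n}{\delta_n}\to0$ while $s_n+1$ stays bounded away from $0$, one has $\Vert P_n\Vert_1=\delta_n(s_n+1)$ eventually, so $\frac{P_n}{\Vert P_n\Vert}=\begin{pmatrix}\frac{\alpha_n}{\delta_n(s_n+1)}&\frac{s_n}{s_n+1}\\0&\frac1{s_n+1}\end{pmatrix}\to\begin{pmatrix}0&\frac{s(\omega)}{s(\omega)+1}\\0&\frac1{s(\omega)+1}\end{pmatrix}$ exponentially; and (\ref{**}) gives $\lambda_1=\log q$ (the summand $\delta_n^2(1+s_n^2)$ dominates while $\frac1n\log(1+s_n^2)\to0$ a.s.), whence $\lambda_2=\log(pq)-\lambda_1=\log p<\lambda_1$.
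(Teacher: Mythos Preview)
Your proof is correct and follows the paper's framework: both read everything off the scalar sequence $s_n$ through the estimates (\ref{****})--(\ref{*****}), Remark~\ref{rem}(ii), and formula~(\ref{**}). The one genuine divergence is the argument for $s_n\to\infty$ in the borderline regime $p=q$. The paper argues via recurrence: the mean-zero random walk $S_m=\log(\alpha_m/\delta_m)$ visits $[0,\infty)$ infinitely often, and since the indices $i$ with $b(\omega_i)\ne0$ have positive density, infinitely many terms of the series are bounded below by a positive constant, forcing divergence (this is the mechanism of the fourth example in Section~1). You instead compute $\mathbb E[s_n]\ge cn$ via the weighted AM--GM inequality $\mathbb E[a/d]\ge p/q=1$ and then invoke Hewitt--Savage, using that a finite permutation of the $\omega_i$ alters $s_\infty$ by only finitely many terms. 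Both routes are sound; yours trades the recurrence theorem for a $0$--$1$ law and sidesteps the (slightly delicate) coupling of the events $\{S_{i-1}\ge0\}$ and $\{b(\omega_i)\ne0\}$, while the paper's stays entirely within the random-walk picture. Two minor remarks: you call (\ref{*****}) ``vacuous'' at $p=q$, but its upper half still gives $s_n\le K(1+\varepsilon)^n$, which is precisely the subexponential growth you then rederive from $S_m=o(m)$ --- the paper simply cites (\ref{*****}) here. And in the $p>q$ regime your appeal to Corollary~\ref{exp} is harmless but superfluous (and a bit delicate, since $d_{\mathcal H}(P_n)=\infty$ when $\gamma_n=0$): the exponential convergence of the normalized columns is immediate from your explicit formula $\widehat C_2=\frac{1}{s_n+1}\binom{s_n}{1}$, and $\lambda_2<\lambda_1$ follows directly from Proposition~\ref{lower} applied to $d_\infty$ together with~(\ref{**}).
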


\begin{proof}(i) If $p\ge q$ one has almost surely $\lim_{n\to\infty}s_n(\omega)=\infty$ because, among the indexes $i$ such that $b_i(\omega)\ne0$, one has $\frac{\alpha_{i-1}(\omega)}{\delta_{i-1}(\omega)}\ge1$ infinitely many times. The difference between $\begin{pmatrix}1\\0\end{pmatrix}$ and the second normalized column of $P_n$ is $\frac1{s_n(\omega)+1}\begin{pmatrix}1\\-1\end{pmatrix}$, so it tends almost surely to $0$ by (\ref{*****}), with an exponential rate of convergence if and only if $p>q$. We conclude about $\frac{P_nV}{\Vert P_nV\Vert}$ and $\frac{P_n}{\Vert P_n\Vert}$, by using Remark~\ref{rem} (ii).

If $p>q$ one has almost surely, from (\ref{**}) and (\ref{****}), $\lambda_2\le\lambda_1+\lim_{n\to\infty}\frac1n\log\frac{\delta_n(\omega)}{\alpha_n(\omega)}<\lambda_1$. If $p=q$, (\ref{**}), (\ref{****}) and (\ref{*****}) imply almost surely $\lambda_1=\lambda_2$.

(ii) If $p<q$, the almost sure exponential convergence of $\frac{P_nV}{\Vert P_nV\Vert}$ and $\frac{P_n}{\Vert P_n\Vert}$ is due to the fact that $s(\omega)-s_n(\omega)$ tends exponentially to $0$ from (\ref{****}). The almost sure inequality $\lambda_2<\lambda_1$ is due to $\lambda_2\le\lambda_1+\lim_{n\to\infty}\frac1n\log\frac{\alpha_n(\omega)}{\delta_n(\omega)}$.\hfill\end{proof} 

\section{The case where all the $A(k)$ have only two nonnull entries}\label{diagonal}

We assume that the $A(k)$ have rank $2$, so the normalized columns of $P_n$ are $\begin{pmatrix}1\\0\end{pmatrix}$ and $\begin{pmatrix}0\\1\end{pmatrix}$.

Suppose first that all the $A(k)$ are diagonal with two nonnul entries. If the constants $p$ and $q$ defined in (\ref{***}) are distinct, then with probability $1$ one has $\lambda_2<\lambda_1$, the normalized column vector $\frac{P_nV}{\Vert P_nV\Vert}$ converges exponentially for any nonnegative vector $V$, as well as the normalized matrix $\frac{P_n}{\Vert P_n\Vert}$. If $p=q$, then with probability $1$ we have $\lambda_1=\lambda_2$, the normalized column matrix $\frac{P_nV}{\Vert P_nV\Vert}$ do not necessarily converge, and the limit points of the normalized matrix $\frac{P_n}{\Vert P_n\Vert}$ have the form $\begin{pmatrix}\alpha&0\\0&1\end{pmatrix}$ or $\begin{pmatrix}1&0\\0&\alpha\end{pmatrix}$, $\alpha\in[0,1]$.

Suppose now that at least one of the $A(k)$ has the form $\begin{pmatrix}0&b(k)\\c(k)&0\end{pmatrix}$, $b(k)c(k)\ne0$. Let $i_1,i_2,\dots$ be the indexes such that $A(\omega_i)$ has this form. For any $n\in\mathbb N$ the product matrix $P_n$ or $P_n\begin{pmatrix}0&1\\1&0\end{pmatrix}$ has the diagonal form $\begin{pmatrix}\alpha_n&0\\0&\delta_n\end{pmatrix}$ with
$$
\alpha_n=a(\omega_1)\dots a(\omega_{i_1-1})b(\omega_{i_1})d(\omega_{i_1+1})\dots d(\omega_{i_2-1})c(\omega_{i_2})\dots
$$
$$
\delta_n=d(\omega_1)\dots d(\omega_{i_1-1})c(\omega_{i_1})a(\omega_{i_1+1})\dots a(\omega_{i_2-1})b(\omega_{i_2})\dots.
$$
Clearly, the first as well as the second normalized column diverge. With probability~$1$ the limit points of the normalized matrix $\frac{P_n}{\Vert P_n\Vert}$ have the form $\begin{pmatrix}\alpha&0\\0&1\end{pmatrix}$, $\begin{pmatrix}0&\alpha\\1&0\end{pmatrix}$, $\begin{pmatrix}0&1\\\alpha&0\end{pmatrix}$ or $\begin{pmatrix}1&0\\0&\alpha\end{pmatrix}$, $\alpha\in[0,1]$, and $\lambda_1=\lambda_2$.

\section{The almost sure divergence of $\frac{P_n}{\Vert P_n\Vert}$}

It remains to prove the last item of Theorem \ref{main}, and more generally the following

\begin{pro}\label{infinite}(i) Let $(A_n)$ be a sequence of complex-valued $d\times d$ matrices. If the sequence of normalized matrices $\frac{A_1\dots A_n}{\Vert A_1\dots A_n\Vert}$ converges, the matrices $A$ such that $A_n=A$ for infinitely many $n$ have a common left-eigenvector, which is a row of the limit matrix.

(ii) Given a set $\mathcal A=\{A(0),\dots,A(s-1)\}$ of complex-valued $d\times d$ matrices without common left-eigenvector, a positive probability vector $(p_0,\dots,p_{s-1})$ and the product probability on $\{0,\dots,s-1\}^{\mathbb N}$, the sequence of the normalized matrices $\frac{A(\omega_1)\dots A(\omega_n)}{\Vert A(\omega_1)\dots A(\omega_n)\Vert}$ diverges for almost all sequence $(\omega_n)_{n\in\mathbb N}\in\{0,\dots,s-1\}^{\mathbb N}$.
\end{pro}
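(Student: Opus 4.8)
The plan is to establish (i) by a single-step telescoping argument and then to deduce (ii) from (i) together with the second Borel--Cantelli lemma.

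For (i), I would write $P_n=A_1\cdots A_n$ and $L=\lim_n P_n/\Vert P_n\Vert$, and first record that $\Vert L\Vert=1$ by continuity of the norm, so $L\neq0$ and $L$ has at least one nonzero row. The key step is then: fix a matrix $A$ occurring infinitely often, say $A_n=A$ for all $n$ in an infinite set $S$. Along $S$ one has $P_n=P_{n-1}A$, while $P_{n-1}/\Vert P_{n-1}\Vert\to L$ (a subsequence of the given convergent sequence). Applying the continuous map $M\mapsto MA$ gives $P_n/\Vert P_{n-1}\Vert=(P_{n-1}/\Vert P_{n-1}\Vert)A\to LA$ along $S$, hence $\Vert P_n\Vert/\Vert P_{n-1}\Vert\to\mu_A:=\Vert LA\Vert$. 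When $\mu_A>0$, dividing shows $P_n/\Vert P_n\Vert=(P_n/\Vert P_{n-1}\Vert)(\Vert P_{n-1}\Vert/\Vert P_n\Vert)\to LA/\mu_A$ along $S$, and comparison with the limit $L$ of the full sequence yields $LA=\mu_A L$; when $\mu_A=0$ this identity is the trivial $LA=0$. Either way, every nonzero row $v$ of $L$ satisfies $vA=\mu_A v$, so $v$ is a left eigenvector of $A$. Since the rows of $L$ do not depend on $A$, the nonzero rows of $L$ are common left eigenvectors of all matrices occurring infinitely often, which is exactly (i).

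For (ii), I would note that under the product probability, for each fixed $k$ the events $\{\omega_n=k\}$ ($n\in\mathbb N$) are independent with $\sum_n p_k=\infty$, so by Borel--Cantelli each symbol $k$, hence each matrix $A(k)$, occurs infinitely often almost surely. Let $C$ be the event that $A(\omega_1)\cdots A(\omega_n)/\Vert A(\omega_1)\cdots A(\omega_n)\Vert$ converges (in particular the products are nonzero, so the normalized matrices are well defined on $C$). If $\omega$ lies in $C$ and all the $A(k)$ occur infinitely often, part (i) forces $\{A(0),\dots,A(s-1)\}$ to have a common left eigenvector, contradicting the hypothesis. Hence $C$ is disjoint, modulo a null set, from a probability-one event, so $P(C)=0$; that is, the normalized product matrix diverges almost surely.

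I expect the only delicate point to be the degenerate case $\Vert LA\Vert=0$ in (i): there one cannot divide by $\mu_A$, but one still reads off $LA=0$, so the nonzero rows of $L$ are eigenvectors of $A$ with eigenvalue $0$, and the statement of (i) holds unchanged. The remaining bookkeeping --- passing to subsequences, using continuity of the norm and of right-multiplication by $A$, and the observation that $P_n=0$ for some $n$ only makes the normalized sequence eventually undefined and hence non-convergent, so such $\omega$ lie outside $C$ --- is routine.
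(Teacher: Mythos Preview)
Your proof is correct and follows essentially the same approach as the paper's: for (i), both arguments pass to the subsequence where $A_n=A$, use $P_n=P_{n-1}A$ together with $P_{n-1}/\Vert P_{n-1}\Vert\to L$ to get $LA=\Vert LA\Vert\,L$, and read off that any nonzero row of $L$ is a left eigenvector; for (ii), both use that each $A(k)$ occurs infinitely often almost surely (you via Borel--Cantelli, the paper by a direct tail-probability argument) and then invoke (i). Your explicit treatment of the degenerate case $\mu_A=0$ is a minor expository addition, but the underlying argument is the same.
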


\begin{proof}(i) Let $P_n=A_1\dots A_n$, we suppose that the limit $P=\lim_{n\to\infty}\frac{P_n}{\Vert P_n\Vert}$ exists and we denote $\lambda_n=\frac{\Vert P_n\Vert}{\Vert P_{n-1}\Vert}$. If $A_n=A$ for $n=n_1,n_2,\dots$ with $n_1<n_2<\dots$ one has
\begin{equation}\label{PA}
\begin{array}{rcl}PA&=&\lim_{k\to\infty}\frac{P_{n_k-1}}{\Vert P_{n_k-1}\Vert}A\\&=&\lim_{k\to\infty}\lambda_{n_k}\frac{P_{n_k}}{\Vert P_{n_k}\Vert}.\end{array}
\end{equation}
One deduce $\Vert PA\Vert=\lim_{k\to\infty}\left\Vert\lambda_{n_k}\frac{P_{n_k}}{\Vert P_{n_k}\Vert}\right\Vert=\lim_{k\to\infty}\lambda_{n_k}$, and the equality (\ref{PA}) becomes $PA=\Vert PA\Vert P$. Since the matrix $P$, of norm $1$, has at least one row with a nonnull entry, this row is a left-eigenvector of $A$ related to the eigenvalue $\Vert PA\Vert$.

%By the multiplicative property of the norm, $\lambda_{n_k}\le\Vert A\Vert$ and therefore $\lambda_{n_k}\left(\frac{A_1\dots A_{n_k}}{\Vert A_1\dots A_{n_k}\Vert}-L\right)$ tends to $0$.

%We deduce $LA=\lim_{k\to\infty}(\lambda_{n_k}L)$, hence $\lambda:=\lim_{k\to\infty}\lambda_{n_k}$ exists and $LA=\lambda L$.

(ii) Given $A\in\mathcal A$ and $n_0\in\mathbb N$, the set of the sequences $(\omega_n)$ such that $A(\omega_n)\in\mathcal A\setminus\{A\}$ for any $n\ge n_0$ has probability $0$. Hence, with probability $1$, the sequence $(A(\omega_n))$ has infinitely many occurrences of each of the matrices of $\mathcal A$ and, by (i), $\frac{A(\omega_1)\dots A(\omega_n)}{\Vert A(\omega_1)\dots A(\omega_n)\Vert}$ diverges almost surely.\hfill\end{proof}

\section{The rank one property of the infinite products of matrices}

Here is a general result, deduced from \cite{o}.

\begin{thm}\label{r1}Let $(A_n)$ be a sequence of complex-valued $d\times d$ matrices, we denote by $P_n$ their normalized product with respect to the euclidean norm:
$$
P_n:=\frac{A_1\dots A_n}{\Vert A_1\dots A_n\Vert_2}.
$$
There exists a sequence $(Q_n)$ of matrices of rank $1$ such that
$$
\lim_{n\to\infty}\Vert P_n-Q_n\Vert_2=0
$$
if and only if $i_1(n)=1$ for $n$ large enough and $\lim_{n\to\infty}\frac{\lambda_2(n)}{\lambda_1(n)}=0$, where the $i_j(n)$ and the $\lambda_j(n)$ arise from the singular values decomposition:
$$
P_n=U\begin{pmatrix}\lambda_1(n)\mathbb I_{i_1(n)}&\dots&0\\\vdots&\ddots&\vdots\\0&\dots&\lambda_\delta(n)\mathbb I_{i_\delta(n)}\end{pmatrix}V\quad\hbox{with }\lambda_1(n)>\lambda_2(n)>\dots>\lambda_\delta(n).
$$
\end{thm}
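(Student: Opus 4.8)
The plan is to reduce everything to the optimal rank‑one approximation property of the singular value decomposition, which is the external ingredient borrowed from \cite{o}. Assume throughout (as is implicit in the statement) that $A_1\dots A_n\ne0$ for every $n$, so that $P_n$ and its singular values are well defined, and list the singular values of $P_n$ \emph{with multiplicity} as $\sigma_1(n)\ge\sigma_2(n)\ge\dots\ge\sigma_d(n)\ge0$, so that $\sigma_1(n)=\dots=\sigma_{i_1(n)}(n)=\lambda_1(n)$, the next $i_2(n)$ of them equal $\lambda_2(n)$, and so on; since we normalize by the euclidean (Frobenius) norm one has $\sum_{j=1}^d\sigma_j(n)^2=\Vert P_n\Vert_2^{\,2}=1$. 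The fact I would quote from \cite{o} (the Eckart--Young--Mirsky theorem) is that among all matrices $Q$ with $\operatorname{rank} Q\le1$ the quantity $\Vert P_n-Q\Vert_2$ is minimized by the matrix $Q_n^{\mathrm{opt}}$ obtained by truncating the singular value decomposition of $P_n$ to its largest singular value (so $\Vert Q_n^{\mathrm{opt}}\Vert_2=\lambda_1(n)$), with residual
\[
\min_{\operatorname{rank} Q\le1}\Vert P_n-Q\Vert_2=\Big(\sum_{j=2}^d\sigma_j(n)^2\Big)^{1/2}=\sqrt{1-\lambda_1(n)^2}.
\]
Consequently the existence of \emph{some} sequence $(Q_n)$ of rank‑one matrices with $\Vert P_n-Q_n\Vert_2\to0$ is equivalent to $\sqrt{1-\lambda_1(n)^2}\to0$, i.e. to $\lambda_1(n)\to1$.

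It then remains to prove that $\lambda_1(n)\to1$ holds if and only if $i_1(n)=1$ for $n$ large enough and $\lambda_2(n)/\lambda_1(n)\to0$. For the direct implication: if $i_1(n)\ge2$ for infinitely many $n$, then along such $n$ one gets $1=\sum_j\sigma_j(n)^2\ge i_1(n)\lambda_1(n)^2\ge2\lambda_1(n)^2$, hence $\lambda_1(n)\le1/\sqrt2$, contradicting $\lambda_1(n)\to1$; so $i_1(n)=1$ eventually. Once $i_1(n)=1$ we have $1=\lambda_1(n)^2+\sum_{j\ge2}\sigma_j(n)^2\ge\lambda_1(n)^2+\lambda_2(n)^2$, so $\lambda_2(n)^2\le1-\lambda_1(n)^2\to0$, and since $\lambda_1(n)\to1$ this gives $\lambda_2(n)/\lambda_1(n)\to0$. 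For the converse, note first that $1/\sqrt d\le\lambda_1(n)\le1$ for all $n$ (immediate from $\lambda_1(n)^2\le\sum_j\sigma_j(n)^2=1\le d\,\lambda_1(n)^2$), so $\lambda_2(n)\le\lambda_2(n)/\lambda_1(n)\to0$; then, using $i_1(n)=1$ eventually, $1=\lambda_1(n)^2+\sum_{j\ge2}i_j(n)\lambda_j(n)^2\le\lambda_1(n)^2+(d-1)\lambda_2(n)^2$, whence $\lambda_1(n)^2\ge1-(d-1)\lambda_2(n)^2\to1$ and $\lambda_1(n)\to1$. Chaining this equivalence with the reduction of the first paragraph proves the theorem.

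I do not expect a genuine obstacle here; the work lies entirely in bookkeeping. The two points to handle with care are: (a) keeping the multiplicities $i_j(n)$ straight when translating between the list $\sigma_1(n)\ge\dots\ge\sigma_d(n)$ of singular values with multiplicity and the strictly decreasing list $\lambda_1(n)>\lambda_2(n)>\dots$ — the residual $\sqrt{1-\lambda_1(n)^2}$ only ``sees'' one copy of the top value, which is exactly why $i_1(n)=1$ must come out as a separate condition; and (b) observing that, since $\lambda_1(n)$ always lies in the fixed interval $[1/\sqrt d,1]$, the conditions $\lambda_2(n)/\lambda_1(n)\to0$ and $\lambda_2(n)\to0$ are equivalent, so the ratio in the statement may be treated interchangeably with $\lambda_2(n)$ itself. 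It is worth recording at the outset the standing hypothesis $A_1\dots A_n\ne0$ so that $P_n$ is defined, and noting that the singular value arguments are valid verbatim for complex matrices, so no extra care is needed on that account. The sole place where \cite{o} is really invoked is the optimality of SVD truncation for rank‑one approximation together with the exact value of the residual; everything else is elementary linear algebra.
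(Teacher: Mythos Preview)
Your argument is correct and in fact tidier than the paper's own. The paper does not invoke Eckart--Young--Mirsky at all: for the ``if'' direction it writes out the SVD expansion $P_n=\lambda_1\sum_{i\le i_1}C_iR_i+\lambda_2\sum\cdots$ and takes $Q_n=C_1(n)R_1(n)$ directly; for the ``only if'' direction it runs a compactness argument, extracting a subsequence along which all the $C_i(n)$, $R_i(n)$, $\lambda_j(n)/\lambda_1(n)$, $i_j(n)$ converge, and then appeals to an ad hoc lemma (that $\sum_{i=1}^r C_iR_i$ with the $C_i$ and the $R_i$ separately orthogonal has rank exactly $r$) to show the limit of $P_{n_k}$ has rank $\ge2$ whenever $i_1\ge2$ or $\alpha_2\ne0$, which is incompatible with $\Vert P_n-Q_n\Vert\to0$. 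Your route --- replacing both directions by the single identity $\min_{\operatorname{rank}Q\le1}\Vert P_n-Q\Vert_2=\sqrt{1-\lambda_1(n)^2}$ and then the elementary equivalence $\lambda_1(n)\to1\Leftrightarrow(i_1(n)=1$ eventually and $\lambda_2(n)/\lambda_1(n)\to0)$ --- is shorter, avoids the subsequence extraction and the rank lemma, and makes the role of $i_1(n)=1$ completely transparent. The paper's approach is more self-contained (it does not import the optimality of SVD truncation as a black box) but otherwise gains nothing. One bibliographic remark: the optimal low-rank approximation theorem you quote is Eckart--Young (1936) / Mirsky (1960), not Oseledets; the paper's citation of \cite{o} here is itself somewhat misleading, so you need not follow it.
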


\begin{proof}Denoting by $C_i$ the columns of $U$, by $R_i$ the rows of $V$, and denoting by $s_j(n)$ the sum $i_1(n)+\dots+i_j(n)$ we have 
\begin{equation}\label{svd}
P_n=\sum_{0<i\le s_1(n)}C_i(n)R_i(n)+\frac{\lambda_2(n)}{\lambda_1(n)}\sum_{s_1(n)<i\le s_2(n)}C_i(n)R_i(n)+\dots
\end{equation}
so the converse implication of the theorem holds with $Q_n:=C_1(n)R_1(n)$.

To prove the direct implication we need the following lemma:

\begin{lem}\label{rank}Any matrix $A$ of the form $A=\sum_{i=1}^rC_iR_i$, where the nonnull columns $C_i$ are orthogonal as well as the nonnull rows $R_i$, has rank $r$.
\end{lem}

\begin{proof}We complete $\{R_1,\dots,R_r\}$ to a orthogonal base. The rank of $A$ is the rank of the family $\{A{R_i}^*\}$, but $A{R_i}^*=\left\{\begin{array}{ll}C_i&(i\le r)\\0&(i>r)\end{array}\right.$ and consequently this family has rank $r$.\hfill\end{proof}

Now using the compacity of the set of vectors of norm $1$, there exists at least one increasing sequence of integers $(n_k)$ such that the columns $C_i(n_k)$, the rows $R_i(n_k)$, the reals $\frac{\lambda_i(n)}{\lambda_1(n)}$ and the integers $s_j(n_k)$ converge. Let $C_i$, $R_i$, $\alpha_i$ and $s_j$ be their respective limits, one deduce from (\ref{svd}) that $(P_{n_k})$ converge and
$$
\lim_{k\to\infty}P_{n_k}=\sum_{0<i\le s_1(n)}C_iR_i+\alpha_2\sum_{s_1<i\le s_2}C_iR_i+\dots.
$$
If $i_1(n)\ne1$ for infinitely many $n$, or if $\frac{\lambda_2(n)}{\lambda_1(n)}$ do not converge to $0$, we can choose the sequence $(n_k)$ such that $i_1\ge2$ or $\alpha_2\ne0$. Consequently -- from Lemma \ref{rank} -- $\lim_{k\to\infty}P_{n_k}$ has rank at least $2$, so it is not possible that $\lim_{n\to\infty}\Vert P_n-Q_n\Vert_2=0$ with $Q_n$ of rank $1$.\hfill\end{proof}

{\bf Acknowledgments.} We are grateful to Pr. Ludwig Elsner and associate mathematicians, for many indications about the Lyapunov exponents and the singular vectors.

\vfill

\begin{thebibliography}{99}

%%%%%%%%%%%%%%%%%%%%%%%%%%%%%%%%%%%%%%%%%
\bibitem{BL} \textsc{Bougerol P., Lacroix J.},
{\sl Products of random matrices with applications to Schrödinger operators},
Progress in Probability and Statistics, Vol. 8. Boston - Basel - Stuttgart: Birkhäuser. X, 283~p. 
%DM 88.00 
(1985).
%\href{http://oabsia.tumblr.com/post/14841685291}{\sl http://oabsia.tumblr.com/post/14841685291}
%%%%%%%%%%%%%%%%%%%%%%%%%%%%%%%%%%%%%%%%%
\bibitem{o} \textsc{Oseledets V.I.},
A multiplicative ergodic theorem. Lyapunov characteristic numbers for dynamical systems.
{\sl Trans. Mosc. Math. Soc.} {\bf 19}, 197--231 (1968).
%%%%%%%%%%%%%%%%%%%%%%%%%%%%%%%%%%%%%%%%%
\bibitem{R} \textsc{Ruelle D.},
Ergodic theory of differentiable dynamical systems,
{\sl Publ. Math. Paris} {\bf 50}, 27--58 (1979).
%%%%%%%%%%%%%%%%%%%%%%%%%%%%%%%%%%%%%%%%%
\bibitem{S} \textsc{Seneta E.},
{\sl Non-negative matrices and Markov chains. Revised reprint of the 2nd ed.},
Springer Series in Statistics. New York, NY: Springer. xiii, 287~p. %EUR~45.96
(2006).
%\href{http://books.google.fr/books?id=hsE0__8frPoC&printsec=frontcover&dq=seneta+nonnegative&cd=1#v=onepage&q=seneta%20nonnegative&f=false}{\sl http://books.google.fr/books?id=hsE0\_\_8frPoC\&printsec=frontcover\&dq=seneta+nonnegative\&cd=1\#v=onepage\&q=seneta\%20nonnegative\&f=false}
%%%%%%%%%%%%%%%%%%%%%%%%%%%%%%%%%%%%%%%%%
\bibitem{sto} \textsc{Thomas A.},
Can an infinite left-product of nonnegative matrices be expressed in terms of infinite left-products of stochastic ones?
{\sl arXiv} (2010). \href{http://arxiv.org/pdf/0908.3538}{\sl http://arxiv.org/pdf/0908.3538}
%%%%%%%%%%%%%%%%%%%%%%%%%%%%%%%%%%%%%%%%%

\end{thebibliography}
\end{document}